\newtheorem{theorem}{Theorem}[section]
\newtheorem{lemma}[theorem]{Lemma}
\newtheorem{conjecture}[theorem]{Conjecture}
\newtheorem{claim}{}[theorem]
\title{Graphs that contain a $K_{1,2,3}$ and no induced subdivision of $K_4$ are $4$-colorable}
\author{Rong Chen\\
\\
Center for Discrete Mathematics,\ \ Fuzhou University\\
Fuzhou,\ \ P. R. China}
\begin{document}

\maketitle

\footnote{Mathematics Subject Classification: 05C15, 05C12, 05C75.

Emails: rongchen@fzu.edu.cn (R. Chen). }


\begin{abstract}
In 2012, L\'ev\^eque, Maffray, and Trotignon conjectured that each  graph $G$ that contains no induced subdivision of $K_4$ is $4$-colorable. In this paper, we prove that this conjecture holds when $G$ contains a $K_{1,2,3}$.

{\em\bf Key Words}: coloring; induced subgraphs.
\end{abstract}

\section{Introduction}
All graphs considered in this paper are finite, simple, and undirected. Let $G$ be a graph.
A {\em proper coloring} of $G$ is an assignment of colors to the vertices of $G$ such that no two adjacent vertices receive the same color. A graph is {\em $k$-colorable} if it has a proper coloring using at most $k$ colors. 
A {\em subdivision} of $G$ is a graph obtained from $G$ by replacing the edges of $G$ with independent paths of length at least one between their end vertices. An {\em $\text{ISK}_4$} is a graph that is isomorphic to a subdivision of $K_4$.
For a graph $H$, we say that $G$ {\em contains} $H$ if $H$ is isomorphic to an induced subgraph of $G$, and otherwise, $G$ is {\em $H$-free}. For a family $\mathcal{F}$ of graphs, we say that $G$ is {\em $\mathcal{F}$-free} if $G$ is $F$-free for every graph $F\in\mathcal{F}$.

Given a graph with a large chromatic number, it is natural to ask whether it must contain induced subgraphs with particular properties. A family $\mathcal{F}$ of graphs is said to be {\em $\chi$-bounded} if there exists a function $f$ such that for every graph $G\in \mathcal{F}$, every induced subgraph $H$ of $G$ satisfies $\chi(H)\leq f(\omega(H))$. 
The notion of $\chi$-bounded families was introduced by Gy\'arf\'as in \cite{AG}.
Scott \cite{Scott} proposed the following conjecture: for every graph $H$, the class defined by excluding all subdivision of $H$ as induced subgraphs is $\chi$-bounded, and proved this conjecture in the case that $H$ is a forest. However, this conjecture was disproved by Pawlik et al. \cite{Pawlik}. In 2012, L\'ev\^eque, Maffray, and Trotignon \cite{LMT} showed that the chromatic number of $\text{ISK}_4$-free graphs is bounded by a constant $c$, which implies that Scott's Conjecture is true when $H=K_4$. Since no example of an $\text{ISK}_4$-free graph whose chromatic number is 5 or more is known, L\'ev\^eque, Maffray, and Trotignon \cite{LMT} in 2012 proposed the following conjecture.
\begin{conjecture}\label{conj}(\cite{LMT}, Conjecture 1.7.)
Every $\text{ISK}_4$-free graph is $4$-colorable.
\end{conjecture}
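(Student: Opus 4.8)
The plan is to prove the full conjecture by first reducing the clique number, then peeling off the easy cutset, and finally splitting on a single forbidden configuration. Since $K_4$ is itself a (trivial) subdivision of $K_4$, every $\text{ISK}_4$-free graph $G$ contains no induced $K_4$, so $\omega(G)\le 3$; the target $\chi(G)\le 4$ therefore asks only for a coloring exceeding the clique number by at most one. At the outset I would dispose of clique cutsets: if a clique $C$ separates $G$ into $G_1$ and $G_2$, then $|C|\le\omega(G)\le 3$, so two $4$-colorings of $G_1$ and $G_2$ can be made to agree on $C$ after permuting colors, and they glue to a $4$-coloring of $G$. Hence one may assume $G$ is connected with no clique cutset, and it remains to $4$-color such a graph.

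The organizing dichotomy is whether $G$ contains an induced $K_{1,2,3}$. When it does, I would appeal to the main theorem of the present paper: by analyzing how each vertex of $G$ attaches to a fixed induced $K_{1,2,3}$, and using $\text{ISK}_4$-freeness to forbid the attachment patterns that would complete a subdivided $K_4$, one pins down a rigid global structure, of join or blow-up type, from which a $4$-coloring can be read off directly. When $G$ is in addition $K_{1,2,3}$-free, I would invoke the L\'ev\^eque--Maffray--Trotignon decomposition theorem underlying their bound $\chi\le c$: every $\text{ISK}_4$-free graph is series-parallel, complete bipartite, the line graph of a graph of maximum degree at most three, or else admits a clique cutset or a proper $2$-cutset. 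Each basic class is comfortably within reach, since series-parallel graphs are $3$-colorable, complete bipartite graphs are $2$-colorable, and the line graph of a graph $H$ with $\Delta(H)\le 3$ has chromatic number $\chi'(H)\le 4$ by Vizing's theorem; and clique cutsets have already been removed.

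This reduces the $K_{1,2,3}$-free case to showing that $4$-colorability survives proper $2$-cutsets. For a proper $2$-cutset $\{a,b\}$ I would argue by induction on $|V(G)|$: for each side of the cutset, replace the opposite side by a short marker path joining $a$ and $b$, verify that the augmented graph is again $\text{ISK}_4$-free (here the marker length must be chosen with care, and the absence of an induced $K_{1,2,3}$ is what rules out the newly created obstructions), and $4$-color it by the inductive hypothesis. The gluing then rests on a compatibility claim: for a suitable marker, each augmented block admits $4$-colorings realizing both ``$a,b$ equal'' and ``$a,b$ distinct'', so that the two partial colorings reconcile across the cutset.

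I expect the proper $2$-cutset gluing to be the principal obstacle, and indeed this is where the conjecture is hard in full generality: controlling, for each block simultaneously, the set of colorings attainable on $\{a,b\}$ while keeping the augmented graphs $\text{ISK}_4$-free is delicate and can fail for naively chosen markers. The second locus of difficulty is the attachment analysis of the $K_{1,2,3}$-containing case, which is the technical core of this paper. By contrast, the clique-cutset reduction and the colorings of the basic classes are routine.
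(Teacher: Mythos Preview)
The statement you are addressing is a \emph{conjecture} in the paper, not a theorem: the paper does not prove it, and indeed it remains open (the best general bound cited is $\chi\le 8$ from \cite{CCCFL}). What the paper actually establishes is Theorem~\ref{main-thm}, the special case where $G$ contains an induced $K_{1,2,3}$; so there is no ``paper's own proof'' of Conjecture~\ref{conj} to compare against.

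Your proposal is a reasonable outline of how one might attack the full conjecture, and your dichotomy on $K_{1,2,3}$ is natural given this paper's contribution, but it is not a proof. The decisive gap is exactly the one you flag yourself: the proper $2$-cutset gluing. Your ``compatibility claim'' --- that each augmented block admits $4$-colorings realizing both $c(a)=c(b)$ and $c(a)\ne c(b)$ --- is not established, and there is no known argument for it; this is precisely the obstruction that has kept Conjecture~\ref{conj} open since 2012 despite the decomposition theorem you cite. Adding a marker path and asserting that the augmented graph stays $\text{ISK}_4$-free and $K_{1,2,3}$-free is not sufficient: even if the augmented block is $4$-colorable by induction, you get \emph{one} $4$-coloring with \emph{some} relation between $c(a)$ and $c(b)$, not both relations, so the two sides need not reconcile. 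Moreover, the $K_{1,2,3}$-free hypothesis does not obviously survive the marker insertion, and in any case the $K_{1,2,3}$-free subclass is not known to be $4$-colorable either --- the current bound of $8$ applies there too. In short, your plan reduces the conjecture to a statement that is essentially as hard as the conjecture itself.
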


L\'ev\^eque, Maffray, and Trotignon \cite{LMT} proved that every $\{\text{ISK}_4, \text{wheel}\}$-free graph is 3-colorable. Trotignon and Vu\v skovi\'c \cite{TV} showed that every $\text{ISK}_4$-free graph with girth at least five is 3-colorable. In the same paper, they further conjectured that every $\{\text{ISK}_4, \text{triangle}\}$-free graph is 3-colorable, which was proved by Chudnovsky et al. \cite{CLSSTV} in 2019. Le \cite{Le} showed that every $\{\text{ISK}_4, \text{triangle}\}$-free graph is 4-colorable and every $\text{ISK}_4$-free graph is 24-colorable. 
Chen et al. \cite{CCCFL} improved Le's upper bound to 8.  
In this paper, we prove 
\begin{theorem}\label{main-thm}
For every $\text{ISK}_4$-free graph $G$, if $G$ contains a $K_{1,2,3}$, then $G$ is $4$-colorable.
\end{theorem}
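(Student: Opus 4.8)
The plan is to argue by contradiction: suppose the theorem fails and let $G$ be a counterexample with $|V(G)|$ minimum, so $G$ is $\text{ISK}_4$-free, contains an induced $K_{1,2,3}$, and is not $4$-colorable. Since a subdivision of $K_4$ may have all of its paths of length one, every $\text{ISK}_4$-free graph is $K_4$-free, so $\omega(G)\le 3$; in particular every clique cutset of $G$ has at most three vertices. The first step is the usual clique-cutset reduction. If a clique $Q$ with $|Q|\le 3$ separates $G$ into $G_1,G_2$ with $V(G_1)\cap V(G_2)=Q$, then, as $K_{1,2,3}$ is $3$-connected, it lies inside one of $G_1,G_2$; the side containing it is a smaller $\text{ISK}_4$-free graph with an induced $K_{1,2,3}$, hence $4$-colorable by minimality, and it remains to color the other side $4$-colorably and compatibly on $Q$ --- possible once that side is known to be $4$-colorable, since $|Q|\le 3$ and the four colors may be permuted. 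I would thus reduce to $G$ having no clique cutset, and likewise no proper $2$-cutset; verifying that the side of such a decomposition that does not inherit the $K_{1,2,3}$ is still $4$-colorable is one of the points that has to be handled carefully, presumably by applying the structural description below to that smaller graph.

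The core of the proof is a local analysis around a maximal dense subgraph. Note first that $K_{1,2,3}=K_1+K_{2,3}$, and more generally that every complete tripartite graph $K_{1,m,n}$ is $\text{ISK}_4$-free: its apex is adjacent to all of the remaining vertices, so it can lie in an induced subdivision of $K_4$ only if that subdivision has at most four vertices and is therefore $K_4$ itself, contradicting $\omega(K_{1,m,n})=3$; and the remaining vertices induce a complete bipartite graph, no induced subgraph of which is a subdivision of $K_4$. Hence I would fix an induced $H\subseteq G$ with $H\cong K_{1,m,n}$, $m\ge 2$, $n\ge 3$, chosen inclusion-maximal, with apex $a$ and parts $B$ (of size $m$) and $C$ (of size $n$). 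The key lemmas describe, for each $v\in V(G)\setminus V(H)$, which subsets of $V(H)$ can equal $N(v)\cap V(H)$. Using $\text{ISK}_4$-freeness --- and, for some patterns, the maximality of $H$ --- one shows that only a short list of traces occurs: for instance, a vertex adjacent to two vertices of $B$ is adjacent either to all of $C$ or to none of it, a vertex adjacent to exactly one vertex of $B$ is adjacent to at most one vertex of $C$, and there are analogous constraints governing adjacency to $a$. Each excluded pattern is certified by exhibiting a concrete induced subdivision of $K_4$, on at most six vertices, inside $H\cup\{v\}$.

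From the trace classification I would reconstruct the global structure of $G$. Partitioning $V(G)$ according to how vertices see $H$, and invoking $2$-connectivity, the absence of clique cutsets, and a further use of $\text{ISK}_4$-freeness to control the edges \emph{among} the vertices outside $H$, one should conclude that $G$ is, up to the pendant pieces removed in the cutset step, a blow-up of a graph of the form $K_{1,m',n'}$ --- the vertices splitting into an apex part, a $B$-part and a $C$-part with complete tripartite-type adjacency between the parts and very restricted adjacency within them (in fact $\omega(G)\le 3$ forces the parts to be independent once the between-part adjacency is complete, making $G$ complete tripartite). Such a graph is $3$-colorable, and reattaching the pendant pieces costs at most one further color; this contradicts $\chi(G)\ge 5$ and proves the theorem.

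The step I expect to be the main obstacle is precisely the passage from the local trace lemmas to the global structure: one must rule out that the several attachment types combine to produce an induced $\text{ISK}_4$ elsewhere, pin down exactly which edges appear among the outside vertices, and control the way the pendant (clique-cutset) pieces hook on --- and, in tandem, show that the side of a cutset decomposition that loses the $K_{1,2,3}$ is still $4$-colorable, so that the induction closes. Essentially all of the case analysis lives in these two interlocking points.
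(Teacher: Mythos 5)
Your opening (minimum counterexample, cutset reduction, a maximal complete tripartite subgraph $H$, analysis of attachments to $H$) matches the paper's skeleton, but the middle of your plan has a genuine gap that the paper fills with machinery you have omitted. Your trace classification cannot be established by exhibiting small induced subdivisions of $K_4$: for example a vertex complete to $B\cup C$ turns $H$ into a $K_{2,2,n}$, which contains $K_{2,2,2}$ and is itself $\text{ISK}_4$-free, so no $\text{ISK}_4$ certificate exists, and maximality of $H$ does not forbid it either (a $K_{2,2,n}$ is not a larger $K_{1,2,n'}$). The paper deals with exactly these configurations not locally but by importing decomposition theorems of L\'ev\^eque--Maffray--Trotignon and a coloring result of Le: in a minimum counterexample with no clique cutset and no proper $2$-cutset, the presence of a $K_{3,3}$ forces $G$ to be complete bipartite/tripartite, and the presence of a prism or a rich square (in particular $K_{2,2,2}$) forces $G$ to be a line graph of a subcubic graph or a rich square, all of which are $4$-colorable; only after $K_{3,3}$, $K_{2,2,2}$ and prisms have been excluded does the attachment lemma (each outside vertex meets $H$ in at most an edge) hold. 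Your plan would in effect have to reprove this rich-square/$K_{3,3}$ analysis, and "one should conclude" is where that unproved work is hiding.

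Moreover, the global structure you are aiming for is too strong and in fact wrong: the correct conclusion (the paper's second lemma) is only that each component of $G-V(H)$ attaches to $H$ in a set that is empty, a clique, or the triangle $A_1\cup A_2=\{a,b_1,b_2\}$, and since $G$ has no clique cutset every component attaches at that triangle; the components themselves can be arbitrary $\text{ISK}_4$-free graphs, so $G$ need not resemble a blow-up of $K_{1,m',n'}$, and the endgame "$3$-color the tripartite core and pay one extra color for pendant pieces" is not available. The paper finishes differently: every vertex of $A_3$ then has neighborhood exactly $\{a,b_1,b_2\}$, so $\chi(G)=\chi(G-A_3)$, contradicting minimality. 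Finally, the circularity you yourself flag --- $4$-coloring the side of a clique cutset or proper $2$-cutset that does not contain the $K_{1,2,3}$ --- is left unresolved in your proposal (your structural description presupposes a $K_{1,2,3}$, so it cannot be applied to that piece), and the proper $2$-cutset gluing needs more than permuting colors since the two cut vertices are non-adjacent; the paper is admittedly terse on this reduction as well, but that does not close the step in your argument.
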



\section{Preliminary}
A {\em cycle} is a connected $2$-regular graph. Let $G$ be a graph. 
For any $u,v\in V(G)$, we often use $u\sim v$ to denote $uv\in E(G)$, and $u\nsim v$ to denote $uv\notin E(G)$. For any $U\subseteq V(G)$, let $G[U]$ be the subgraph of $G$ induced on $U$.  For any vertex disjoint subgraphs $H$ and $H'$ of $G$, 
if there are no edges between $H$ and $H'$, we say they are {\em anticomplete}; and if each vertex in $V(H)$ is adjacent to all vertices in $V(H)$, we say they are {\em complete}. Let $N_G(H)$ be the set of vertices in $V(G)-V(H)$ that have a neighbour in $H$. We say that $N_G(H')\cap V(H)$ is the {\em attachment} of $H'$ to $H$.  We say that an induced path $P$ with ends $v_1,v_2$ is a {\em direct connection} linking $H$ and $H'$ if $v_1$ is the only vertex in $V(P)$ having a neighbour in $V(H)$ and $v_2$ is the only vertex in $V(P)$ having a neighbour in $V(H')$. 
For any $x,y,z\in V(G)$, 
let $d_G(x,y)$ denote the length of a shortest $(x,y)$-path in $G$. Let $P$ be an $(x,y)$-path and $Q$ a $(y,z)$-path. When $P$ and $Q$ are internally disjoint paths, 
let $PQ$ denote the $(x,z)$-path $P\cup Q$. Evidently, $PQ$ is a path when $x\neq z$, and $PQ$ is a cycle when $x=z$. When $u,v\in V(P)$, let $P(u,v)$ denote the subpath of $P$ with ends $u, v$. When there is no confusion, all subscripts are omitted.  

Given an induced cycle $C$ of $G$ and a vertex $v\in V(G)-V(C)$, the vertex $v$ is {\em linked to $C$} if there are three induced paths $P_1,P_2,P_3$ such that
\begin{itemize}
\item $V(P^*_1\cup P^*_2\cup P^*_3)$ is disjoint from $V(C)$;
\item each $P_i$ has $v$ as its one end and the other end in $V(C)$, and there are no other edges between $P_i$ and $C$;
\item for $1\leq i<j\leq 3$, we have $V(P_i)\cap V(P_j)=\{v\}$;
\item if $x\in V(P_i)$ is adjacent to $y\in V(P_j)$, then either $v\in\{x,y\}$ or $\{x,y\}\subseteq V(C)$; and
\item if $v$ has a neighbour $c\in V(C)$, then $c\in V(P_i)$ for some $i$.
\end{itemize}

Lemma \ref{easy}  obviously holds, which will be used frequently without reference.
\begin{lemma}(\cite{CLSSTV}, Lemma 9.)\label{easy}
If $G$ is an $\text{ISK}_4$-free graph, then no vertex of $G$ can be linked to an induced cycle of $G$.
\end{lemma}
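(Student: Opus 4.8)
The plan is the obvious one: suppose for contradiction that some vertex $v$ is linked to an induced cycle $C$ of $G$ through paths $P_1,P_2,P_3$, and exhibit an induced subdivision of $K_4$ inside $G$. Write $c_i$ for the end of $P_i$ lying on $C$. The first step is to observe that $c_1,c_2,c_3$ are pairwise distinct: for $i\neq j$, a common value of $c_i$ and $c_j$ would lie in $V(P_i)\cap V(P_j)=\{v\}$, contradicting $v\notin V(C)$. Hence $c_1,c_2,c_3$ partition $C$ into three arcs, say $Q_{12},Q_{23},Q_{13}$, each of length at least $1$, where $Q_{ij}$ denotes the $c_i$--$c_j$ arc of $C$ not containing the third vertex.

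The second step is to set $H:=G[V(P_1)\cup V(P_2)\cup V(P_3)\cup V(C)]$ and check that $H$ is a subdivision of $K_4$ with branch vertices $v,c_1,c_2,c_3$ and branches $P_1,P_2,P_3,Q_{12},Q_{23},Q_{13}$. Equivalently, one checks that $H$ has no edge beyond those of these six paths: the arcs of $C$ have no chords because $C$ is induced; each $P_i$ has no chord because it is induced; the condition ``$V(P_1^*\cup P_2^*\cup P_3^*)$ is disjoint from $V(C)$'' keeps the $P_i$ internally off $C$, so the $P_i$'s and the arcs are internally disjoint; the condition ``there are no other edges between $P_i$ and $C$'' kills chords from a $P_i$ to an arc; the condition restricting edges between $P_i$ and $P_j$, together with $P_i,P_j$ being induced, kills chords between distinct $P_i$'s; and the last condition forces every neighbour of $v$ on $C$ to be one of $c_1,c_2,c_3$. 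One minor point deserves a line: if $v\sim c_i$ then, since $P_i$ is an induced path with ends $v$ and $c_i$, we must have $P_i=vc_i$, so the edge $vc_i$ is the branch $P_i$ itself and not an additional edge. A routine degree count now shows that in $H$ exactly $v,c_1,c_2,c_3$ have degree $3$ and every other vertex has degree $2$, so $H$ is an $\text{ISK}_4$; as $G$ is $\text{ISK}_4$-free, this is the desired contradiction.

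There is no serious obstacle here — the lemma is essentially a definition-chase — and the only part needing care is the bookkeeping in the second step: the five bullets in the definition of ``$v$ is linked to $C$'' are tailored so that each one excludes one family of chords, which is exactly what makes the resulting subdivision of $K_4$ an induced subgraph rather than merely a subgraph.
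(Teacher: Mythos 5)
Your proof is correct: the five conditions in the definition of ``linked'' do exactly what you say, and the union of $P_1,P_2,P_3$ with the three arcs of $C$ is an induced subdivision of $K_4$ with branch vertices $v,c_1,c_2,c_3$, contradicting $\text{ISK}_4$-freeness. The paper itself gives no proof of Lemma~\ref{easy} (it cites \cite{CLSSTV}, Lemma~9, and calls it obvious), and your definition-chase, including the observation that $v\sim c_i$ forces $P_i=vc_i$, is precisely the intended argument.
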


\section{Proof of Theorem \ref{main-thm}}
To prove Theorem \ref{main-thm}, we need a characterization of $\{\text{ISK}_4, K_{3,3}, K_{2,2,2}, \text{prisms}\}$-free graphs containing a $K_{1,2,3}$. 

\begin{lemma}\label{vertex out H}
Let $G$ be an $\{\text{ISK}_4, K_{3,3} ,K_{2,2,2}\}$-free graph and $H$ be a maximal induced $K_{1,2,n}$ in $G$ with $n\geq2$. Let $v$ be a vertex in $V(G)-V(H)$. Then the attachment of $v$ to $H$ is either empty, or consists of one vertex or of one edge.
\end{lemma}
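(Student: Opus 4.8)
Write $H = K_{1,2,n}$ with vertex classes $A = \{a\}$, $B = \{b_1,b_2\}$, and $C = \{c_1,\dots,c_n\}$, so that $a$ is complete to $B \cup C$, $B$ is complete to $C$, and $A \cup B \cup C$ are each independent. Let $v \in V(G) - V(H)$ and let $N = N_G(v) \cap V(H)$ be its attachment. Suppose for contradiction that $N$ is neither empty, nor a single vertex, nor (the two endpoints of) a single edge. The plan is to run through the possible intersections of $N$ with the three classes and in each case exhibit either an $\text{ISK}_4$, a $K_{3,3}$, a $K_{2,2,2}$, or a larger induced complete multipartite graph contradicting the maximality of $H$.

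**The key structural observations.** The main leverage comes from the fact that $H$ is very dense, so small changes to $v$'s neighbourhood immediately produce a forbidden configuration. First I would record what happens when $v$ is adjacent to all of $B \cup C$: if moreover $v \sim a$ then $\{a,v\}$ against $B \cup C$ gives a $K_{2,2,n} \supseteq K_{2,2,2}$; if $v \nsim a$ then $v$ can be placed in class $A$, giving an induced $K_{1,2,n+1}$ (taking $\{a,v\}$ as the part of size $2$ and... ) — more carefully, $v$ together with $a$ forms a new size-two class complete to $B\cup C$ only if $v\sim b_i,c_j$ for all $i,j$, and since $v\nsim a$ this is exactly an induced $K_{2,2,n}$, again containing $K_{2,2,2}$ as $n\geq 2$. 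So $v$ misses at least one vertex of $B \cup C$. Symmetrically, I would show $v$ cannot be complete to $\{a\}\cup C$ with $n\geq 2$ (that would be $K_{1,2,n}$ with parts $\{a,v\}\cup... $, i.e. $v,b_1,b_2$ independent each complete to $C\cup\{a\}$? — one must check $v\nsim a$ forces $K_{3,n}\supseteq K_{3,3}$), nor complete to $\{a,b_1,b_2\}$ (that gives $v$ joinable to class $C$, enlarging $H$, unless it creates $K_{3,3}/K_{2,2,2}$). These "saturation" claims bound $|N|$ from above class by class.

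**The case analysis.** With those reductions, the surviving possibilities for $N$ are small, and the unifying tool is Lemma~\ref{easy} together with the abundance of induced $4$-cycles and $\text{ISK}_4$'s inside $H$ itself: for instance $a b_1 c_i b_2 a$ is an induced $C_4$ for every $i$, and $\{a, c_i, c_j\}$ with $\{b_1,b_2\}$ gives $K_{2,3}$'s, while $\{a,b_1,c_i,c_j\}$ spans a $K_4$ minus an edge, etc. I would argue: (i) if $N \cap C$ contains two vertices $c_i, c_j$ and $N$ also meets $A \cup B$ in a way that is not a single edge, then routing short paths through $H$ makes $v$ linked to one of the induced cycles $a b_1 c_k b_2$ or $c_i b_1 c_j b_2$, contradicting Lemma~\ref{easy}; (ii) if $N \cap B = \{b_1, b_2\}$ and $N$ meets $C$ in at least one $c_i$ but not all of $C$ (say $c_j \notin N$), then $\{v,b_1,b_2\}$ is independent (using $v\nsim a$, which we may assume after the saturation step) and complete to $\{c_i\}$, and one extends via $a$ and $c_j$ to get a $K_{3,3}$ on $\{v,b_1,b_2\}$ versus $\{c_i, a', \dots\}$ — the precise second triad is chosen from $\{a, c_i, \text{a path}\}$; (iii) the remaining mixed cases (e.g. $N = \{a, c_i\}$ with $c_i$ the only neighbour, which is an edge of $H$ — allowed; $N = \{a, b_1, c_i\}$ not forming an edge-set; $N=\{b_1,c_i,c_j\}$; $N=\{a,c_i,c_j\}$) each yield an $\text{ISK}_4$ with branch vertices among $\{a, b_1, b_2, v\}$ and subdivided edges through the $c$'s.

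**Main obstacle.** The routine part is the enumeration; the genuinely delicate point is case (iii) and its cousins, where $N$ has size $2$ or $3$ but is \emph{not} one of the two allowed shapes — here one must produce the $\text{ISK}_4$ or $K_{3,3}$ by hand and verify induced-ness, in particular that no unwanted chords appear and that $v$'s non-neighbours in $H$ are genuinely non-adjacent. Keeping track of which vertices of $C$ are and are not in $N$ (and using $n \geq 2$ to always have a "spare" $c_j$) is what makes the bookkeeping work, and exploiting maximality of $H$ is essential precisely in the saturated subcases where no forbidden subgraph is otherwise visible. I expect the argument to split into roughly five or six subcases according to $(|N\cap A|, |N\cap B|, |N\cap C|)$, with the $|N \cap B| = 2$ subcase being the one that forces $K_{3,3}$ rather than $\text{ISK}_4$.
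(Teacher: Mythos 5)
Your overall plan---split according to how $N(v)\cap V(H)$ meets the three parts, and in each case invoke the maximality of $H$, the excluded $K_{2,2,2}$ and $K_{3,3}$, or Lemma~\ref{easy}---is the same skeleton as the paper's proof, but the one subcase you work out in any detail contains a step that fails. In your case (ii) you assume $N\cap A_2=\{b_1,b_2\}$, i.e.\ $v\sim b_1$ and $v\sim b_2$, and then assert that $\{v,b_1,b_2\}$ is an independent set: it is not, so the proposed $K_{3,3}$ with this triple as one side collapses (and in any event ``a path'' cannot serve as a vertex of an \emph{induced} $K_{3,3}$). Moreover, no induced $K_{3,3}$ need exist in $G[V(H)\cup\{v\}]$ at that point: when $N=\{b_1,b_2,c_1\}$ one checks directly (for every $n\geq2$) that there is none, since the only independent triples through $v$ are of the form $\{v,c_i,c_j\}$ with $c_i,c_j\notin N$, and their common neighbours are just $b_1,b_2$. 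What does work, and is what the paper does in its claim \ref{in A2A3}, is an $\text{ISK}_4$: $v$ has the three neighbours $b_1,c_1,b_2$ on the induced four-cycle $b_1c_1b_2c_jb_1$ for any $c_j\notin N$, so $v$ is linked to it. Thus the subcase you single out as ``forcing $K_{3,3}$ rather than $\text{ISK}_4$'' is exactly backwards, and your argument for it does not go through.

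Beyond that, the enumeration is incomplete and the remaining hard cases are asserted rather than proved. Case (i) requires $N$ to meet $A_1\cup A_2$ and your list in (iii) is explicitly ``e.g.'', so the genuine cases $N=\{b_1,b_2\}$ and $N\subseteq A_3$ with $|N|=2$ are never argued; the paper settles these at the end of its proof by linking $c_1$ to the four-cycle $vb_1ab_2v$, respectively $a$ to $vc_1b_1c_2v$. Your blanket claim in (iii) that, say, $N=\{a,c_i,c_j\}$ always yields an $\text{ISK}_4$ with branch vertices among $\{a,b_1,b_2,v\}$ fails when $n=2$ and $v$ is complete to $\{a\}\cup A_3$: there $G[V(H)\cup\{v\}]$ is the complete tripartite graph $K_{1,2,3}$, which contains neither an $\text{ISK}_4$ nor a $K_{3,3}$, and only the maximality of $H$ rules it out; your ``saturation'' paragraph gestures at this case but is garbled (``$v\nsim a$ forces $K_{3,n}$'' is invoked in a case where $v\sim a$ by hypothesis, and $K_{3,n}\supseteq K_{3,3}$ needs $n\geq3$). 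Similarly, your first saturation claim ($v\sim a$ and $v$ complete to $A_2\cup A_3$ gives a $K_{2,2,n}$) is wrong as stated because $\{a,v\}$ is not independent, though a $K_4$ is immediate there; and ``we may assume $v\nsim a$ after the saturation step'' is unjustified, since the saturation step does not dispose of all attachments containing $a$ (the paper earns this assumption only after the full case analysis of its claim \ref{in A1}). In short, the proposal has the right shape but a failing key step and the decisive verifications missing.
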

\begin{proof}
Let $A_1=\{a\}, A_2=\{b_1,b_2\}$ and $A_3=\{c_1,c_2,\ldots,c_n\}$ be the three sides of the tripartition of $H$. Since $G$ has no $\text{ISK}_4$, the following obviously holds.
\begin{claim}\label{in2}
If $v$ has a neighbour in $A_i,A_j$, then $v$ is anti-complete to $A_k$, where  $\{i,j,k \}=\{1,2,3 \}$.
\end{claim}

\begin{claim}\label{in A1}
When $v\sim a$, Lemma \ref{vertex out H} holds.
\end{claim}
\begin{proof}[Subproof.]
Without loss of generality we may assume that $v$ has a neighbour in $A_2\cup A_3$, for otherwise Lemma \ref{vertex out H} holds.
Assume that $v\sim b_1$. Then $v$ is anti-complete to $A_3$ by \ref{in2}. By the maximality of $H$, we have $v\nsim b_2$, so $N(v)\cap V(H)=\{a,b_1\}$. That is, Lemma \ref{vertex out H} holds. Hence, we may assume that $v$ is anti-complete to $A_2$. By symmetry we may assume that $n\geq3$, $c_1\sim v\sim c_2$ and $v\nsim c_3$, for otherwise 
the lemma holds. Then $c_1$ is linked to $b_1c_2b_2c_3b_1$, which is a contradiction.
\end{proof}

By \ref{in A1}, we may assume that $v\nsim a$.
\begin{claim}\label{in A2A3}
If $v$ has a neighbour in $A_2$ and $A_3$, then Lemma \ref{vertex out H} holds.
\end{claim}
\begin{proof}[Subproof.]
Without loss of generality we may assume that $b_1\sim v\sim c_1$. Assume that $v\sim b_2$. Since $v$ is not linked to $b_1c_1b_2c_i$ for each $2\leq i\leq n$, we have $v\sim c_i$. That is, $v$ is complete to $A_2\cup A_3$, so $G$ contains a $K_{2,2,2}$, which is a contradiction. 
Hence, $v\nsim b_2$. Since $v$ is not linked to $b_1c_1b_2c_i$ for each $2\leq i\leq n$, we have $v\nsim c_i$. So $N(v)\cap V(H)=\{b_1,c_1\}$.
\end{proof}

By \ref{in A1} and \ref{in A2A3}, we may assume that $|N(v)\cap V(H)|\geq2$ and $N(v)\cap V(H)\subseteq A_i$ for some $2\leq i\leq 3$. When $A_i=A_2$, since $A_i=N(v)\cap V(H)$, the vertex $c_1$ is linked to $vb_1ab_2v$. Similarly, we can show that $A_i\neq A_3$ This proves Lemma \ref{vertex out H}.
\end{proof}

If a graph has a unique cycle, we say that it is {\em unicyclic}.
A {\em prism} is a graph consisting of three vertex disjoint paths $P_1=x_1\ldots y_1$, $P_2=x_2\ldots y_2$, and $P_3=x_3\ldots y_3$ of length at least one, such that $x_1x_2x_3$ and $y_1y_2y_3$ are triangles and there are no edges between these paths except those of the two triangles.

\begin{lemma}\label{K1,2,3-}
Let $G$ be an $\{\text{ISK}_4, K_{3,3}, K_{2,2,2}, \text{prisms}\}$-free graph and $H$ be a maximal induced subgraph of $G$ that is isomorphic to a $K_{1,2,n}$ for some integer $n\geq3$. Let $A_1,A_2,A_3$ be the three sides of the tripartition of $H$ with $|A_1|=1, |A_2|=2$ and $|A_3|=n$. If $G\neq H$, then the attachment of $C$ to $H$ is either empty, a clique or $A_1\cup A_2$ for any component $C$ of $G-V(H)$.
\end{lemma}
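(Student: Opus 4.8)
The plan is to fix a component $C$ of $G-V(H)$, write $S:=N_G(C)\cap V(H)$ for its attachment, assume $S\neq\emptyset$ and that $S$ is not a clique, and deduce that $S=A_1\cup A_2=\{a,b_1,b_2\}$. I will use two features of $H=K_{1,2,n}$ throughout: $a$ is adjacent to every other vertex of $H$, and $H[\{a,b_1,b_2,c_i,c_j\}]$ is a $K_{1,2,2}$ for any two distinct $i,j$. Also, by Lemma \ref{vertex out H} the attachment of each single vertex of $C$ is a clique of $H$, so no vertex of $C$ is adjacent to two non-adjacent vertices of $H$; hence for any two non-adjacent $x,y\in S$, taking a shortest path $q_0\cdots q_r$ in $G[V(C)]$ with $x\sim q_0$ and $y\sim q_r$ and appending $x$ and $y$ extracts an induced path $Q=x\,p_0\,p_1\cdots p_k\,y$ with $p_0,\dots,p_k\in V(C)$, $k\geq1$, in which $p_0$ is the only $p_\ell$ adjacent to $x$ and $p_k$ the only $p_\ell$ adjacent to $y$. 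Since the only non-adjacent pairs inside $H$ are $\{b_1,b_2\}$ and the pairs $\{c_i,c_j\}$, the argument splits according to which of these lies in $S$.

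The first step is to prove $|S\cap A_3|\leq1$. Suppose $c_i,c_j\in S$ and extract $Q=c_i\,p_0\cdots p_k\,c_j$ as above. All six pairs among $\{a,b_1,c_i,c_j\}$ are edges of $H$ except $\{c_i,c_j\}$, and within $G[\{a,b_1,c_i,c_j,p_0,\dots,p_k\}]$ the only edges incident with a $p_\ell$ besides those of $Q$ are possible edges from $p_\ell$ to $a$ or $b_1$; hence $G$ contains an $\text{ISK}_4$ with branch vertices $a,b_1,c_i,c_j$ unless $a$ or $b_1$ has a neighbour among $p_0,\dots,p_k$, and, replacing $b_1$ by $b_2$, unless $a$ or $b_2$ does. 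So either $a$, or else both $b_1$ and $b_2$, has a neighbour on the interior of $Q$. If $a$ has such a neighbour, then taking its first and last neighbours on $Q$ yields an induced cycle formed by $a$ and a subpath of $Q$, and one checks that $c_i$ is then linked to this cycle (contradicting Lemma \ref{easy}), or else a triangle $\{a,b_1,c_i\}$ or $\{a,b_2,c_i\}$ of $H$, a suitable triangle, and three disjoint connecting paths form a prism. If instead both $b_1$ and $b_2$ have neighbours on the interior of $Q$, one argues similarly with the induced cycle $c_i\,p_0\cdots p_s\,b_\ell\,c_i$, where $s$ is the least index with $b_\ell\sim p_s$. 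Every sub-case contradicts Lemma \ref{easy}, or $\text{ISK}_4$- or prism-freeness; hence $|S\cap A_3|\leq1$. Since $S$ is not a clique, the non-edge it contains is therefore $\{b_1,b_2\}$, so $b_1,b_2\in S$.

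It remains to show $S=\{a,b_1,b_2\}$, i.e.\ that $S$ contains $a$ and no $c_i$. First suppose some $c_i\in S$; by the previous step it is the unique element of $S\cap A_3$, so we may pick $c_j\in A_3\setminus\{c_i\}$ with $c_j\notin S$, whence $c_j$ is anticomplete to $C$. Extract an induced path $R=b_1\,r_0\cdots r_m\,b_2$ through $C$. As before, all six pairs among $\{c_i,a,b_1,b_2\}$ are edges of $H$ except $\{b_1,b_2\}$, so $G$ contains an $\text{ISK}_4$ with branch vertices $c_i,a,b_1,b_2$ unless $a$ or $c_i$ has a neighbour among $r_0,\dots,r_m$; if $a$ has none, then $b_1\,r_0\cdots r_m\,b_2\,c_j\,b_1$ is an induced cycle and $a$ is linked to it via $ab_1,ab_2,ac_j$, a contradiction; the remaining case, in which $a$ has a neighbour on the interior of $R$, is excluded by a similar (and somewhat longer) analysis, using subpaths of $R$ together with $a$, $b_1$, $b_2$ and the triangles $\{a,b_\ell,c_i\}$ and $\{a,b_\ell,c_j\}$. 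Hence $S\cap A_3=\emptyset$. Finally, if $a\notin S$ then, extracting $R=b_1\,r_0\cdots r_m\,b_2$ through $C$, its interior is anticomplete to $V(H)\setminus\{b_1,b_2\}$, so $b_1\,a\,b_2\,r_m\cdots r_0\,b_1$ is an induced cycle and each $c_i$ (adjacent on it exactly to $a,b_1,b_2$) is linked to it, a contradiction. Therefore $a\in S$ and $S=\{a,b_1,b_2\}=A_1\cup A_2$.

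I expect the main obstacle to be the case analysis that I have only sketched in the last two paragraphs: the interior of the extracted connecting path may a priori have several neighbours in $\{a,b_1,b_2\}$, and turning each pattern of such adjacencies into a contradiction requires choosing carefully which subpath of the path to combine with which triangle or induced cycle coming from $H$ (and possibly refining the choice of extracted path). It is there that $\text{ISK}_4$-freeness, prism-freeness, and (via Lemma \ref{easy}) the absence of a vertex linked to an induced cycle are all used together, and there that essentially all the length of the argument lies.
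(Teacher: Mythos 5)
Your overall skeleton (reduce to $S\supseteq\{b_1,b_2\}$, then exclude $A_3$ from $S$, then force $a\in S$) is plausible, and the pieces you actually carry out are correct: the observation that $\{a,b_\ell,c_i,c_j\}$ together with an induced connecting path gives an $\text{ISK}_4$ unless $a$ or $b_\ell$ attaches to its interior, and the two linking arguments at the end (linking $a$ to $b_1r_0\cdots r_mb_2c_jb_1$, and linking $c_i$ to $b_1ab_2r_m\cdots r_0b_1$) are fine. But the proof has a genuine gap: the entire case analysis in which $a$, or both $b_1$ and $b_2$, have neighbours on the interior of the extracted path is only asserted (``one checks that \dots'', ``excluded by a similar (and somewhat longer) analysis''), and as you yourself concede, that is where essentially all of the work lies. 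These are not routine verifications that a referee can be asked to fill in; they are the core of the lemma.

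Moreover, it is doubtful that your purely local strategy --- one connecting path $Q$ plus the adjacencies of $a,b_1,b_2$ to it --- can be completed as sketched. For example, in your Step 1 with $a$ anticomplete to the interior of $Q=c_ip_0\cdots p_kc_j$ and $b_1\sim p_0$, $b_2\sim p_k$, the forbidden structure that actually arises is a prism on the triangles $\{b_1,c_i,p_0\}$ and $\{b_2,c_j,p_k\}$, not a linked vertex nor a prism built from the triangles of $H$ that you list; other adjacency patterns require yet other constructions. The paper avoids this by organizing the proof globally and in a different order: it first shows $a\in N(C)$ (choosing the connecting path minimal over \emph{all} pairs of attachment vertices and linking a vertex of the opposite side $A_j$), then confines $N(C)\cap V(H)$ to $A_1\cup A_i$ using a direct connection from $x'\in A_j$ to the path (yielding an $\text{ISK}_4$ or prism), then bounds $|N(C)\cap V(H)|$ by $3$ via a minimal connected subgraph through three vertices of $A_3$ together with $A_2$ (yielding an $\text{ISK}_4$ or $K_{3,3}$), and only then kills the case $N(C)\cap V(H)=\{a,c_1,c_2\}$ with the $\text{ISK}_4$ on $V(P)\cup A_2\cup\{c_3\}$ --- a step that needs the already-established fact that $A_2$ and $c_3$ have no neighbours in $C$, information not available inside your local configuration. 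So completing your argument would require genuinely new ideas (re-chosen minimal paths, direct connections, the $K_{3,3}$ obstruction), not just patience with sub-cases.
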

\begin{proof}
Set $A_1:=\{a\}, A_2:=\{b_1,b_2\}$ and $A_3:=\{c_1,c_2,\ldots,c_n\}$. Let $C$ be a component of $G-V(H)$. Assume that the attachment of $C$ to $H$ is neither empty nor a clique. 
\begin{claim}\label{a}
$a$ has a neighbour in $V(C)$. 
\end{claim}
\begin{proof}[Subproof.]
Assume not. Since the attachment of $C$ to $H$ is not a clique, there are $x_1,x_2\in A_i$ having a neighbour in $V(C)$  for some $2\leq i\leq 3$. Let $y_1,y_2\in V(C)$ with $y_1\sim x_1$ and $y_2\sim x_2$. By Lemma \ref{vertex out H}, $y_1\neq y_2$. Let $P$ be a shortest $(y_1,y_2)$-path in $C$. Without loss of generality we may assume that $(x_1,x_2, y_1,y_2)$ is chosen such that $P$ is minimum. When some vertex $y\in V(A_j)$ is anti-complete to $P$, since $a$ has no neighbour in $V(C)$, the vertex $y$ is linked to $ax_1y_1Py_2x_2a$, where $\{i,j\}=\{2,3\}$. So each vertex in $V(A_j)$ has a neighbour in $V(P)$. By the choice of $(x_1,x_2, y_1,y_2)$, we have that $A_i=A_3$, $A_3-\{x_1,x_2\}$ is anti-complete to $P$, and by symmetry we may assume that $\{y_k\}=N(b_k)\cap V(P)$ for each $1\leq k\leq2$. For any $x_3\in A_3-\{x_1,x_2\}$, since $\{a,x_3\}$ and $P$ are anti-complete, $x_3$ is linked to $ab_1y_1Py_2b_2a$, which is a contradiction.
\end{proof}
\begin{claim}\label{N(C)}
$a\in V(H)\cap N(C)\subseteq A_1\cup A_i$ for some integer $2\leq i\leq3$.
\end{claim}
\begin{proof}[Subproof.]
By \ref{a}, we have $a\in V(H)\cap N(C)$. Let $x\in A_2\cup A_3$ have a neighbour in $V(C)$. Let $y_1,y_2\in V(C)$ with $y_1\sim a$ and $y_2\sim x$. Note that $y_1$ maybe equal to $y_2$. Let $P$ be a shortest $(y_1,y_2)$-path in $C$. Without loss of generality we may assume that $x$ is chosen such that $|V(P)|$ is minimum. Assume that $x\in A_i$ for some $2\leq i\leq3$. Set $\{i,j\}=\{2,3\}$. 
Since \ref{N(C)} holds when $A_j$ and $C$ are anti-complete, there is a vertex $x'\in A_j$ with $N(x')\cap V(C)\neq\emptyset$. 
When $x'$ has a neighbour in $V(P)$, by the choice of $x$, we have $\{y_2\}=N(x')\cap V(P)$, so $x'$ is linked to $ay_1Py_2xa$, which is a contradiction. So $x'$ has no neighbour in $V(P)$. Let $Q$ be a direct connection linking $x'$ and $P$ with $Q\subset C$. Then $|V(Q)|\geq1$ as $x'$ has no neighbour in $V(P)$. Let $q$ be the end of $Q$ that has a neighbour in $V(P)$. By the choice of $P$, either $|N(q)\cap V(P)|=1$ or $\{q_1,q_2\}\subseteq N(q)\cap V(P)=V(P(q_1,q_2))$ with $1\leq d_P(q_1,q_2)\leq2$. No matter which case happens, $G[V(P)\cup\{a,x,x'\}]$ contains an $\text{ISK}_4$ or a prism as $|V(Q)|\geq1$, which is a contradiction.
\end{proof}
\begin{claim}\label{N(C)+1}
$|V(H)\cap N(C)|=3$.
\end{claim}
\begin{proof}[Subproof.]
Assume not. Since $V(H)\cap N(C)$ is not a clique, by \ref{N(C)}, $a\in V(H)\cap N(C)\subseteq A_1\cup A_3$ and $|A_3\cap N(C)|\geq3$. 
By symmetry we may assume that $c_1,c_2,c_3\in N(C)$. Let $K$ be a minimal connected induced subgraph of $G$ with  $\{c_1,c_2,c_3\}\subset V(K)$ and $K-\{c_1,c_2,c_3\}\subseteq C$. Since $K$ is unicyclic, a path or a subdivision of $K_{1,3}$ whose degree-1 vertices are in $\{c_1,c_2,c_3\}$, the graph $G[V(K)\cup A_2]$ contains an $\text{ISK}_4$ or a $K_{3,3}$, which is not possible.
\end{proof}

Assume that $V(H)\cap N(C)=\{a,c_1,c_2\}$. Let $P$ be an induced $(c_1,c_2)$-path with interior in $C$. Then $G[V(P)\cup A_2\cup\{c_3\}]$ is an $\text{ISK}_4$, which is not possible. So $V(H)\cap N(C)=A_1\cup A_2$ by \ref{N(C)} and \ref{N(C)+1}. That proves Lemma \ref{K1,2,3-}.
\end{proof}


Let $G$ be a graph. 
For any integer $k\geq0$, a {\em $k$-cutset} in $G$ is a subset $S\subset V(G)$ of size $k$ such that $G-S$ is disconnected. A {\em proper $2$-cutset} of $G$ is a 2-cutset $\{a,b\}$ such that $a\nsim b$, $V(G)-\{a,b\}$ can be partitioned into two non-empty sets $X$ and $Y$ so that $X$ and $Y$ are anticomplete and neither $G[X\cup \{a,b\}]$ nor $G[Y\cup \{a,b\}]$ is an $(a,b)$-path. 
A square $S=\{v_1,v_2,v_3,v_4\}$ is an induced cycle $C$ of length four such that $v_1,v_2,v_3,v_4$ occur on $C$ in this order. A {\em link} of $S$  is an induced path $P$ with ends $p,p'$ such that either $p=p'$ and $N(p)=S$ or $N(p)=\{v_1,v_2\}$ and $N(p')=\{v_3,v_4\}$, and there are no edges between $S$ and the interior vertices of $P$. A {\em rich square} is a graph $K$ that contains a square $S$ such that there are at least two components in $K-S$, each of which is a link of $S$. Evidently, $K_{2,2,2}$ is the smallest rich square.

\begin{lemma}(\cite{LMT}, Lemma 3.3.)\label{K3,3}
Let $G$ be an $\text{ISK}_4$-free graph that contains a $K_{3,3}$.  Then either $G$ has a clique cutset of size at most three, or $G$ is complete bipartite or complete tripartite.
\end{lemma}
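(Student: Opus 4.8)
The plan is to establish the statement in the form ``if $G$ is $\text{ISK}_4$-free, contains an induced $K_{3,3}$, and has no clique cutset of size at most three, then $G$ is complete bipartite or complete tripartite''. Since a complete multipartite graph with at least four parts contains $K_4$, hence an $\text{ISK}_4$, this amounts to showing that such a $G$ is complete multipartite. Fix an inclusion-maximal induced subgraph $H$ of $G$ that is complete multipartite and contains an induced $K_{3,3}$; it has $t\in\{2,3\}$ parts $A_1,\ldots,A_t$, and, because the two colour classes of the $K_{3,3}$ are independent triples forced into distinct parts, at least two of the $A_i$ have size at least three. It suffices to prove $V(H)=V(G)$ — then $G=H$ is complete multipartite — and since a disconnected graph has the empty clique cutset we may assume $G$ is connected.

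First I would determine the attachment $N(v)\cap V(H)$ of an arbitrary vertex $v\in V(G)\setminus V(H)$, using only $\text{ISK}_4$-freeness together with the maximality of $H$. The two points are: (a) $v$ is complete to no part $A_i$ with $|A_i|\geq2$ — otherwise, inspecting the remaining parts, one either enlarges $H$ to a larger complete multipartite induced subgraph (a part disjoint from $N(v)$ can absorb $v$), or obtains a $K_4$ (if $v$ becomes complete to a third part), or exhibits an explicit induced subdivision of $K_4$ (two vertices of $A_i$, together with a neighbour and a non-neighbour of $v$ in another part of size $\geq2$, form a $K_4$ with one edge subdivided once); and (b) $v$ has at most one neighbour in each part — otherwise $v$ has two neighbours in some $A_i$ with $|A_i|\geq3$ by (a), and, using a part of size $\geq3$ to supply a subdivision vertex, one again produces an induced subdivision of $K_4$. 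Hence $N(v)\cap V(H)$ meets every part in at most one vertex, so it is a clique of $H$; in particular $|N(v)\cap V(H)|\leq 3$.

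Next, let $C$ be a component of $G-V(H)$ and set $R=N_G(C)\cap V(H)$; every neighbour of a vertex of $C$ lies in $C\cup R$, so $C$ is separated from $V(H)\setminus R$ by $R$. If $R$ is a clique then it is a clique of $H$, hence $|R|\leq3<|V(H)|$, so $R$ is a clique cutset of $G$ of size at most three — contrary to hypothesis. Hence $R$ is not a clique, so there are $a,a'\in R$ in one part, say $A_1$, with $a\nsim a'$; by the single-vertex analysis no vertex of $C$ is adjacent to both. Fix a part $A_2\neq A_1$ with $|A_2|\geq3$, and among all non-adjacent pairs lying in a common part of size $\geq2$ inside $R$, choose one — renamed $a,a'\in A_1$ — together with a shortest induced $(a,a')$-path $P$ whose interior $q_1,\ldots,q_r$ (in order) lies in $C$, so as to minimise $r$. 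Minimality forces the interior of $P$ to attach to $H$ only near its ends: within $A_1$ the only neighbours of $\{q_1,\ldots,q_r\}$ are $a\sim q_1$ and $a'\sim q_r$, and within any other part the interior has at most two neighbours. From this one produces an induced subdivision of $K_4$ in each case: if the interior is anticomplete to every part other than $A_1$, take $P$ as a subdivision of the edge $aa'$ in a $K_4$ on $\{a,a',b,c\}$ with $b\in A_2$, $c\in A_3$ when $t=3$, or take branch vertices $a,a',b,b'$ with $b,b'\in A_2$ anticomplete to the interior, subdividing $aa'$ along $P$ and $bb'$ by a third vertex of $A_1$ when $t=2$; and if some $q_i$ has a neighbour $b^*$ in a part $A_\ell\neq A_1$, take branch vertices $a,a',q_i,b^*$, join $a$ and $a'$ to $q_i$ along $P$, use the edge $q_ib^*$, and join $a$ to $a'$ through a vertex of $A_\ell\setminus\{b^*\}$ anticomplete to the interior. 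Each case contradicts $\text{ISK}_4$-freeness, so $G-V(H)$ has no component and $V(H)=V(G)$.

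The main obstacle is this last step. A component $C$ may be large and may attach to $H$ through many vertices of ``edge type'', so controlling $R$, choosing the right minimal path $P$, reading off from minimality how $P$ sees the parts of $H$, and then verifying that the candidate subdivision of $K_4$ is genuinely induced (no part meeting the interior of $P$ so as to create a chord) is exactly the delicate bookkeeping carried out in the proofs of Lemmas \ref{vertex out H} and \ref{K1,2,3-}. The single-vertex attachment analysis and the clique-cutset observation are comparatively routine; what makes the whole argument require care is that $G$ is only assumed $\text{ISK}_4$-free (not also $K_{3,3}$-free or prism-free, as in those earlier lemmas), so every obstruction must be driven down to an induced subdivision of $K_4$ or to the principle that no vertex is linked to an induced cycle.
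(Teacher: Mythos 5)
The paper does not prove this statement at all; it is quoted from L\'ev\^eque--Maffray--Trotignon (\cite{LMT}, Lemma 3.3), so your proposal can only be judged on its own merits. Your global strategy (a maximal induced complete multipartite subgraph $H$ containing the $K_{3,3}$, the observation that $t\le 3$ and two parts have size at least $3$, the single-vertex attachment analysis, and the reduction ``$N(C)$ a clique $\Rightarrow$ clique cutset of size $\le 3$'') is sound and is essentially the strategy of the cited proof. The steps you call routine are indeed fine apart from one loose subcase: in (a), when $t=3$ and $v$ is complete to $A_i$ but anticomplete to both other parts, none of your three stated reasons applies (absorption into a part requires completeness to the \emph{other} parts); the claim still holds, via the induced $K_4$-subdivision on two vertices of $A_i$, one vertex of each other part, and $v$ as subdivision vertex, but as written this case is not covered.

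The genuine gap is in the last step, where $R=N(C)\cap V(H)$ is not a clique and you build an $\text{ISK}_4$ from the minimal path $P=aq_1\cdots q_ra'$. Two points fail. First, in your case ``some $q_i$ has a neighbour $b^*\in A_\ell$'', the proposed subdivision with branch vertices $a,a',q_i,b^*$ is induced only if $q_i$ is the \emph{unique} interior neighbour of $b^*$; minimality of $P$ does not exclude $b^*$ having several neighbours among $q_1,\dots,q_r$ (the pair $a,b^*$ is adjacent, hence not among the pairs over which you minimised), and any extra such edge is a chord into one of the subpaths $a\cdots q_i$ or $q_i\cdots a'$. Second, the vertex of $A_\ell\setminus\{b^*\}$ anticomplete to the interior need not exist: only two parts are guaranteed to have size $\ge 3$, so when $t=3$ the part $A_\ell$ may have size $1$ (or size $2$ with both vertices attached to the interior), and substituting a vertex from a different big part does not work because that vertex is adjacent to the branch vertex $b^*$, again creating a chord. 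Relatedly, your assertion that ``within any other part the interior has at most two neighbours'' is true but is not an immediate consequence of minimality as stated (one must argue, via bracketing consecutive attachments, that two attached vertices of the same part can only meet the interior in $q_1$ and $q_r$ respectively), and even granted it does not repair the two failures above. So the crux of the lemma --- turning a non-clique attachment of a component into an induced subdivision of $K_4$ in \emph{every} configuration, with no $K_{3,3}$- or prism-freeness to lean on --- is exactly the part your sketch does not yet deliver.
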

\begin{lemma}(\cite{LMT})\label{prism}
Let $G$ be an $\text{ISK}_4$-free graph. If $G$ contains a prism or a rich square, then $G$ is the line graph of a graph with maximum degree $3$ or a rich square, or $G$ has a clique cutset of size at most three, or $G$ has a proper $2$-cutset.
\end{lemma}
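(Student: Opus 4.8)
The plan is to argue by contradiction. Suppose $G$ is an $\mathrm{ISK}_4$-free graph that contains a prism or a rich square but is not the line graph of a graph of maximum degree $3$, is not a rich square, and has neither a clique cutset of size at most three nor a proper $2$-cutset. Choose a subgraph $K$ of $G$ that is a prism or a rich square and is \emph{maximal}, in the sense that no prism or rich square of $G$ has vertex set properly containing $V(K)$. Every prism is the line graph of a graph of maximum degree $3$ (the triangular prism is $L(K_4)$, and a prism with a rib of length at least two is the line graph of a theta graph, whose only vertices of degree $3$ are its two branch vertices), and by assumption $G$ is neither such a line graph nor a rich square; hence $V(G)\neq V(K)$. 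Fix a component $C$ of $G-V(K)$ and set $S:=N_G(C)$; since $C$ is a component of $G-V(K)$ we have $S\subseteq V(K)$. Recall that an $\mathrm{ISK}_4$-free graph contains no $K_4$, hence has clique number at most $3$. If $S$ is empty or a clique, then $|S|\le 3<6\le|V(K)|$, so $V(K)\setminus S\neq\emptyset$ and $S$ is a clique cutset of size at most three separating $C$ from $V(K)\setminus S$, contrary to assumption. Therefore $S$ is not a clique.

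The core of the argument is to show that a non-clique attachment $S$ is impossible, via a case analysis on how $S$ meets the structure of $K$, each case ending in an $\mathrm{ISK}_4$ (contradicting $\mathrm{ISK}_4$-freeness), a prism or rich square whose vertex set properly contains $V(K)$ (contradicting maximality), or a proper $2$-cutset (contrary to assumption). When $K$ is a prism with triangles $x_1x_2x_3$, $y_1y_2y_3$ and ribs $P_1,P_2,P_3$, one first uses Lemma~\ref{easy} (no vertex is linked to an induced cycle) to confine the attachment of $C$ to the interior of any single rib to at most one edge, and to show that if $S$ lies in the vertex set of one rib together with at most one of the two triangles then $S$ is a clique or an $\mathrm{ISK}_4$ appears; this leaves the cases in which $S$ meets two ribs, or meets both triangles. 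In the latter case one takes a minimal connected subgraph $R$ of $G[V(C)\cup S]$ joining suitable attachment vertices on the two triangles and, again using Lemma~\ref{easy} and the defining properties of a prism, shows that $R$ is an induced path that can only serve as a new rib — which together with two old ribs and the two triangles forms a prism properly larger than $K$ — unless instead an $\mathrm{ISK}_4$, or a proper $2$-cutset at two attachment vertices, appears. When $K$ is a rich square with square $\{v_1,v_2,v_3,v_4\}$ and links $L_1,L_2,\dots$, one classifies the attachment of $C$: either it behaves as an additional link, producing a rich square strictly larger than $K$; or it forces an $\mathrm{ISK}_4$ across the square; or a diagonal pair $\{v_i,v_j\}$ of the square is a proper $2$-cutset of $G$. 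In every case a contradiction is reached, so no such component $C$ exists, which proves the lemma.

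The step I expect to be the genuine obstacle is the middle paragraph, and within it the subcases where $C$ meets both triangles of the prism or both sides of the square: one has to rule out, via embedded subdivisions of $K_4$, all the ways a path through $C$ can interact with these configurations, and to pin down exactly the boundary between an attachment that legitimately enlarges $K$ (and so is excluded by maximality) and one that instead yields a proper $2$-cutset. Organizing this analysis cleanly — including the bookkeeping of direct connections through $C$ and the degenerate cases where ribs or links have length one — is where essentially all the work lies; this is the content of the corresponding argument in \cite{LMT}.
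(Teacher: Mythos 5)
The paper gives no proof of this lemma at all: it is quoted from \cite{LMT} (it combines their decomposition theorems for $\text{ISK}_4$-free graphs containing a prism and containing a rich square), so the only ``proof'' in the paper is the citation. Judged on its own terms, your proposal is an outline rather than a proof, and the gap is exactly where you locate it: the entire case analysis in your middle paragraph --- confining the attachment of a component $C$ to a rib, handling attachments meeting two ribs or both triangles, showing a connection through $C$ either enlarges $K$, creates an $\text{ISK}_4$, or forces a proper $2$-cutset, and the analogous classification of attachments to a rich square --- is only described, never carried out, and you close by deferring it to ``the corresponding argument in \cite{LMT}.'' That analysis is the substance of the lemma (in \cite{LMT} it occupies several pages, with separate treatment of attachments to the interior of one rib, to two ribs, to one or both triangles, of degenerate rib lengths, and of the square/link interactions), so as written the proposal does not establish the statement; it reproduces the strategy of \cite{LMT} (take a maximal induced prism or rich square $K$ and show every component of $G-V(K)$ attaches along a clique or forces one of the listed outcomes) but leaves the decisive steps unproved.

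One factual slip in the portion you do argue: the triangular prism is not $L(K_4)$ --- $L(K_4)$ is the octahedron $K_{2,2,2}$, which is a rich square, not a prism; the triangular prism is $L(K_{2,3})$, and more generally every prism is the line graph of a theta graph. The conclusion you need there, namely that every prism is the line graph of a graph of maximum degree $3$, and hence that $V(G)\neq V(K)$ under your hypotheses, is still true, so this slip does not affect the shape of the argument, but it should be corrected if the sketch were ever expanded into a full proof.
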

\begin{lemma}(\cite{Le})\label{line graph}
If $G$ is the line graph of a graph with maximum degree $3$ or a rich square, then $\chi(G)\leq4$.
\end{lemma}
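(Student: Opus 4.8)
The hypothesis describes two basic classes, so I would split the proof into two independent cases: either $G = L(H)$ is the line graph of a graph $H$ with $\Delta(H)\le 3$, or $G$ is a rich square. The first case is essentially a one-line reduction to an edge-coloring theorem, while the second is a short direct coloring; neither presents a genuine obstacle, so the real work is just organizing the rich-square case.

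For the line-graph case, I would use that proper vertex-colorings of $L(H)$ correspond exactly to proper edge-colorings of $H$: a color class in $L(H)$ is a set of pairwise non-incident edges of $H$, i.e.\ a matching. Hence $\chi(L(H)) = \chi'(H)$, and by Vizing's theorem $\chi'(H) \le \Delta(H)+1 \le 4$, giving $\chi(G)\le 4$ with no further effort.

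For the rich-square case, let $S=\{v_1,v_2,v_3,v_4\}$ be the square, with the $v_i$ in cyclic order, and recall that each component of $G-S$ is a link of $S$. First I would $2$-color $S$ by setting $v_1,v_3\mapsto 1$ and $v_2,v_4\mapsto 2$; note that the two attaching pairs $\{v_1,v_2\}$ and $\{v_3,v_4\}$ each then carry the color set $\{1,2\}$. Because distinct components of $G-S$ are pairwise anticomplete and the interior of each link is anticomplete to $S$, I can color the links one at a time using only colors $3,4$ at the vertices attached to $S$. A single-vertex link $p$ (with $p$ adjacent to all of $S$) is given color $3$. For a path link $u_0u_1\cdots u_k$, with $u_0$ attached to $\{v_1,v_2\}$ and $u_k$ attached to $\{v_3,v_4\}$, I would set $c(u_0)=3$, color the interior greedily along the path (each internal vertex avoids only its single already-colored neighbor, trivial with four colors), and finally choose $c(u_k)\in\{3,4\}\setminus\{c(u_{k-1})\}$.

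The verification is routine: the square is properly $2$-colored; the path edges inside each link are proper by construction; the attaching ends use colors in $\{3,4\}$ and hence differ from their neighbors in $S$ (which carry colors $1,2$); and no further adjacencies exist. The one point deserving an explicit remark — and the closest thing to an obstacle — is that the last vertex $u_k$ can always be completed: since $\{3,4\}$ has two elements while $c(u_{k-1})$ forbids at most one of them, a valid color in $\{3,4\}$ distinct from $c(u_{k-1})$ always remains. Combining the two cases yields $\chi(G)\le 4$.
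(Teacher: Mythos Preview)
Your argument is correct. The line-graph case is exactly the standard reduction to Vizing's theorem, and your coloring of a rich square works as stated: the only point needing care is that the terminal vertex $u_k$ of a path link can always be colored from $\{3,4\}$, and you address this explicitly.

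As for comparison with the paper: there is nothing to compare. The paper does not prove this lemma; it simply quotes it from Le~\cite{Le} and uses it as a black box. Your write-up therefore supplies a self-contained proof where the paper gives none, and the argument you give is essentially the one from the original source.
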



\begin{proof}[Proof of Theorem \ref{main-thm}.]
Assume that Theorem \ref{main-thm} is not true. Let $G$ be a minimal counterexample to this theorem. Then $\chi(G)=5$ and $G$ has neither a proper 2-cutset nor a clique cutset. By Lemmas \ref{prism} and \ref{line graph}, $G$ is $\{K_{2,2,2}, \text{prisms}\}$-free. By Lemma \ref{K3,3}, $G$ is $K_{3,3}$-free.
Let $H$ be a maximal induced subgraph of $G$ that is isomorphic to a $K_{1,2,n}$ for some integer $n\geq3$. Let $A_1, A_2$ and $A_3$ be the three sides of the tripartition of $H$ with $|A_1|=1, |A_2|=2$ and $|A_3|=n$. Evidently, $G\neq H$ and $\chi(G-A_3)\geq4$, for otherwise $\chi(G)\leq \chi(G-A_3)+1\leq4$. Since $G$ is connected and has no clique cutset, by Lemma \ref{K1,2,3-}, the attachment of $C$ to $H$ is $A_1\cup A_2$ for any component $C$ of $G-V(H)$. Then $\chi(G-A_3)=\chi(G)$ as $\chi(G-A_3)\geq4$, which is a contradiction to the minimality of $G$.
\end{proof}

\section{Acknowledgments}
This research was partially supported by grants from the National Natural Sciences
Foundation of China (No. 11971111).



\end{document}